\title{Constructing Reducible Brill--Noether Curves II}
\begin{document}
\maketitle

\begin{abstract}
In this paper, we study maps from reducible curves $f \colon C \cup_\Gamma D \to \pp^r$.
We restrict our attention to two cases:
first, when $f|_D$ factors through a hyperplane $H$ and $f|_C$ is transverse to $H$;
and second, when $r = 3$.
Degeneration to stable maps of this type have played a crucial role in works of Hartshorne, Ballico, and others,
on special cases of the Maximal Rank Conjecture.

However, the general problem of studying when such stable maps
with specified combinatorial types exist remains open.
Here, we give criteria for such Brill--Noether curves of this first type
to exist, of
specified degree $d$ and genus $g$,
such that $f|_C$ is of specified degree $d'$ and genus $g'$.
We also give criteria, sharpening earlier results of \cite{rbn},
for the existence of Brill--Noether space
curves of specified combinatorial types.

As explained in \cite{over}, these results play a key role in the author's proof
of the Maximal Rank Conjecture \cite{mrc}. 
\end{abstract}

\section{Introduction}

The technique of degeneration to a reducible curve has enabled the proof of many results in
the theory of algebraic curves.
These results include
the Brill--Noether theorem and related results \cite{eis-har},
the existence of components of the Hilbert scheme with the expected number
of moduli when the Brill--Noether number is negative \cite{sernesi},
and (using the present work) the Maximal Rank Conjecture \cite{mrc}.

In this paper, we first explore a specific type of degeneration used in the work of Hirschowitz \cite{mrat},
Ballico (\cite{ball2}, \cite{ball}, etc.), and others, to study various special cases of the Maximal Rank Conjecture in $\pp^r$ for $r \geq 4$: Degeneration
of stable maps of degree $d$ and genus $g$
to stable maps from reducible curves $f \colon C \cup_\Gamma D \to \pp^r$, where
$f|_D$ factors as $\iota \circ f_D$ for $\iota \colon H \hookrightarrow \pp^r$ the inclusion
of a hyperplane $H \subset \pp^r$, and $f|_C$ is
transverse to $H$ and of specified degree $d'$ and genus $g'$, and $f(\Gamma)$ is a set of $n$ general points in $H$.
We will show such degenerations exist, subject to certain
numerical constraints, for components of Kontsevich's space of stable maps which
dominate the moduli space of curves; such degenerations may therefore
be used in the study of the geometry of general curves.
In particular, as explained in \cite{over}, they may be used
to prove the Maximal Rank Conjecture.

Second, we explore degenerations of space curves to reducible curves with general nodes,
sharpening earlier results of \cite{rbn} in the case $r = 3$.
Namely,
one method to construct stable maps from reducible curves to $\pp^3$
is to first take a finite nonempty set
$\Gamma$ of general points in $\pp^3$,
and find maps $f_i \colon C_i \to \pp^r$ from curves $C_i$ of general moduli
of specified degrees and genera
which pass through~$\Gamma$.

\begin{center}
\begin{tikzpicture}
\filldraw (1, 1) circle [radius=0.05];
\filldraw (2, 1) circle [radius=0.05];
\filldraw (1, 2) circle [radius=0.05];
\filldraw (2, 2) circle [radius=0.05];
\draw (0, 2.5) .. controls (0.5, 2.5) .. (1, 2);
\draw (0.25, 0.5) .. controls (0.5, 0.5) .. (1, 1);
\draw (1, 1) .. controls (1.5, 1.5) .. (2, 1);
\draw (1, 2) .. controls (1.5, 1.5) .. (2, 2);
\draw (2, 1) .. controls (3, 0) and (3, 3) .. (2, 2);
\draw (3, 2.5) .. controls (2, 2.5) and (1.5, 2.5) .. (2, 2);
\draw (2, 2) .. controls (2.5, 1.5) .. (2, 1);
\draw (2, 1) .. controls (1.5, 0.5) .. (1, 1);
\draw (1, 1) .. controls (0.5, 1.5) .. (1, 2);
\draw (1, 2) .. controls (1.25, 2.25) .. (1.5, 2.25);
\draw (-0.2, 2.5) node{$C_1$};
\draw (3.3, 2.5) node{$C_2$};
\draw [decorate, decoration={brace, mirror, amplitude=0.75ex}] (0.8, 0.5) -- (2.2, 0.5);
\draw (1.5, 0.15) node{$\Gamma$};
\end{tikzpicture}
\end{center}

\noindent
Results of \cite{vogt} determine exactly when such
curves may be found:

\begin{thm}[Theorem~1.1 of \cite{vogt}] \label{thm11}
There exists a nondegenerate map from a curve $C \to \pp^3$ of degree $d$ and genus $g$
with $C$ of general moduli,
passing through $n$ general points in $\pp^3$,
if and only if
$4d - 3g - 12 \geq 0$ and
\[\begin{cases}
n \leq 2d & \text{if $(d, g) \notin \{(5, 2), (6, 4)\}$;} \\
n \leq 9 & \text{if $(d, g) \in \{(5, 2), (6, 4)\}$.} \\
\end{cases}\]
\end{thm}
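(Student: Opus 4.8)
The plan is to prove both directions by a dimension count combined with an inductive degeneration argument. For \emph{necessity}, observe that the space of pairs $(C, f)$ with $C$ a smooth genus-$g$ curve and $f \colon C \to \pp^3$ a nondegenerate map of degree $d$ has, over the locus where $[C]$ is general, dimension $(3g - 3) + \rho(g, 3, d) + \dim \mathrm{PGL}_4 = 4d$ when nonempty, and is empty unless $\rho(g, 3, d) = 4d - 3g - 12 \geq 0$; adjoining $n$ marked points and requiring their images to be $n$ prescribed general points of $\pp^3$ changes the dimension by $n - 3n$, so interpolation forces $4d + n \geq 3n$, i.e.\ $n \leq 2d$. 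For the two exceptional pairs one argues separately: a nondegenerate curve of degree $5$ and genus $2$, respectively of degree $6$ and genus $4$, in $\pp^3$ necessarily lies on a quadric surface — in the genus-$4$ case because it is the canonical model, lying on its unique quadric, and in the genus-$2$ case by a Riemann--Roch computation giving $h^0(\mathcal{I}_C(2)) \geq 1$. Since $h^0(\pp^3, \mathcal{O}(2)) = 10$, ten general points of $\pp^3$ lie on no quadric, forcing $n \leq 9$ in these cases.

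For \emph{sufficiency} I would induct on $(d, g)$, degenerating the desired curve to a reducible nodal curve $C_0 \cup_\Gamma R$ in which $C_0$ is a Brill--Noether-general curve of smaller degree $d_0$ and genus $g_0$ already known to pass through $n_0$ general points, $R$ is a rational normal curve (a line, a conic, or a twisted cubic) chosen as generally as possible subject to meeting $C_0$ along $\Gamma$ and passing through $q_{n_0 + 1}, \dots, q_n$, and $|\Gamma| = g - g_0 + 1$ so that the union has arithmetic genus $g$. The essential input is a smoothing-with-general-moduli statement: that such a configuration deforms to a smooth nondegenerate curve of degree $d$ and genus $g$ that is still general in $M_g$ and still passes through all $n$ points. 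Via the standard exact sequences relating the normal sheaf of the union to those of the pieces and the nodes, this reduces to the vanishing of the appropriate $H^1$ of the normal sheaf twisted down by the point-conditions — equivalently, to the reducible configuration itself ``having interpolation'' — which one verifies by bookkeeping the contributions of $C_0$, of $R$, and of $\Gamma$, using the companion analysis of normal bundles of reducible curves in \cite{rbn}.

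Concretely the reductions are of two kinds: attaching a $1$-secant line through one of the points, which gives $(d, g, n) \mapsto (d - 1, g, n - 1)$; and attaching a $2$-secant line or a higher-degree rational tail through several points, which gives $(d, g, n) \mapsto (d - \delta, g - \delta + 1, n - m)$ for suitable $\delta$ and $m$. One checks that each move preserves (or improves) the slack in both $4d - 3g - 12 \geq 0$ and $n \leq 2d$, so that after finitely many steps one lands in a short list of base cases: rational curves, whose interpolation is classical; a few low-genus curves; and the canonical genus-$4$ sextic, whose interpolation through up to $9$ points is checked directly on a quadric surface, where it has bidegree $(3, 3)$ and moves in a $15$-dimensional linear system dominating $M_4$. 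The pairs $(5, 2)$ and $(6, 4)$ reappear at this stage as precisely the cases where the image is trapped on a quadric and the naive bound $n = 2d$ is unattainable; these, together with the handful of small cases that anchor the induction, are dispatched by such explicit surface models.

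The main obstacle is the smoothing-with-general-moduli step in the tight range $n = 2d$, where the dimension count leaves no slack: one must choose $\Gamma$ and $R$ general enough that the combined normal-sheaf cohomology vanishes, and then prove that this vanishing holds for \emph{every} $(d, g) \notin \{(5, 2), (6, 4)\}$. Establishing this uniformly — and correctly separating the two genuinely exceptional pairs from the many nearby cases where the count is also tight — is the heart of the argument.
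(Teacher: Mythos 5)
This statement is quoted verbatim from \cite{vogt}; the paper you were given does not prove it, so your proposal can only be measured against Vogt's actual argument. Your necessity direction is essentially correct and matches the standard argument: the incidence count $4d + n \geq 3n$ over the $4d$-dimensional space of BN-curves gives $n \leq 2d$, and the two exceptional pairs are handled by observing that a quintic of genus $2$ (via $h^0(\mathcal{I}_C(2)) \geq 10 - 9 = 1$) and the canonical sextic of genus $4$ lie on quadrics, while $10$ general points lie on no quadric.

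The sufficiency direction, however, is where the entire content of the theorem lives, and there your text is a plan rather than a proof. The ``smoothing-with-general-moduli statement'' you invoke is equivalent to interpolation for the normal bundle $N_C$ of a general BN space curve, i.e.\ the vanishing of $H^1(N_C(-p_1 - \cdots - p_n))$ (and, in the extremal case $n = 2d$, of $H^0$ as well, since $\chi(N_C(-p_1-\cdots-p_n)) = 4d - 2n = 0$); proving this is precisely the hard part of \cite{vogt}, and you explicitly defer it. Moreover, the reduction step is not as innocent as ``bookkeeping the contributions of $C_0$, of $R$, and of $\Gamma$'': restricting the normal bundle of $C_0 \cup_\Gamma R$ to the components produces \emph{modified} normal bundles (twists and elementary modifications at the attaching points), and controlling their cohomology with the point conditions distributed between the components --- with zero slack when $n = 2d$ --- is where all the technical work lies. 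You also do not exhibit the inductive moves concretely enough to check that they never route the induction through the exceptional pairs $(5,2)$, $(6,4)$ or through other tight configurations where the naive vanishing fails, nor do you produce and verify the finite list of base cases (your one explicit base case, the bidegree-$(3,3)$ curve on a quadric, is in fact one of the \emph{exceptions}, where $n = 2d = 12$ is unattainable). So the proposal correctly identifies the shape of Vogt's argument --- degeneration to unions with $1$- and $2$-secant lines, normal-bundle interpolation, explicit analyses on quadrics --- but the steps that make the theorem true for \emph{every} $(d,g)$ outside the two exceptions are asserted, not proven; as written this is a genuine gap.
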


We then pick subsets $\Gamma_i \subset C_i$ which map injectively under $f_i$ onto $\Gamma$, and
write $C_1 \cup_\Gamma C_2$ for (a choice of) the curve obtained
from $C_1$ and $C_2$ by gluing 
$\Gamma_1$ to $\Gamma_2$ via the obvious isomorphism.
The maps $f_i$ give rise to a map $f \colon C_1 \cup_\Gamma C_2 \to \pp^3$.
Note that even when the $f_i$ are immersions of smooth curves,
$f$ could fail to be an immersion (for example if $C_1$ and $C_2$ meet at an additional
point not in $\Gamma$). This is the reason why we work with stable
maps, instead of taking the scheme-theoretic union of the corresponding curves in projective space.

In this paper, we will show
that the
stable maps $C_1 \cup_\Gamma C_2 \to \pp^3$ constructed in this manner
lie in the closure of the
locus of stable maps from curves of general moduli,
subject to certain mild constraints.
In particular, they may therefore
be used in the study of the geometry of stable maps from general curves.

\medskip

More precisely, write $\bar{M}_g(\pp^r, d)$ (respectively $\mathcal{H}_{d, g, r}$ assuming that $r \geq 3$)
for the Kontsevich space (respectively Hilbert scheme) which
classifies stable maps $C \to \pp^r$ of degree~$d$ from a nodal curve $C$
of genus~$g$ (respectively subschemes of $\pp^r$ with Hilbert polynomial $P(x) = dx + 1 - g$).
If $X$ is a component of $\bar{M}_g(\pp^r, d)$ (respectively $\mathcal{H}_{d, g, r}$)
whose generic member is a map
from a smooth curve, which is an immersion if $r \geq 3$,
birational onto its image if $r = 2$, and finite if $r = 1$
(respectively is a smooth curve),
there is a natural map (respectively rational map)
$X \to \bar{M}_g$.
We refer to a stable map $C \to \pp^r$
(respectively a subscheme $C \subset \pp^r$ for $r \geq 3$)
as a \emph{Brill--Noether curve} (\emph{BN-curve}) if it
corresponds to a point in some such component $X$
which both dominates $\bar{M}_g$, and whose generic member is nondegenerate.
Moreover, we say a stable map $C \to \pp^r$
(respectively a curve $C \subset \pp^r$ for $r \geq 3$)
is an \emph{interior curve} if it lies in a unique component
of the Kontsevich space (respectively Hilbert scheme).

The Brill--Noether theorem
asserts that BN-curves of degree $d$ and genus $g$ in $\pp^r$ exist
if and only if the \emph{Brill--Noether number}
\[\rho(d, g, r) := (r + 1)d - rg - r(r + 1) \geq 0;\]
and that in this case, the locus of BN-curves forms an irreducible
component of $\bar{M}_g(\pp^r, d)$.
(And, if $r \geq 3$, forms an irreducible component of $\mathcal{H}_{d,g,r}$
which is birational to the component of  $\bar{M}_g(\pp^r, d)$ corresponding to BN-curves.)

\medskip

Then our first goal is to construct reducible BN-curves $f \colon C \cup_\Gamma D \to \pp^r$
of the above form, where both $f|_C$ and $f_D$ are BN-curves too.
Since the genus of $D$ and degree of $f_D$ are
\[d'' = d - d' \tand g'' = g + 1 - g' - n,\]
and $C \cup_\Gamma D$ must be connected, and the hyperplane section $f(C) \cap H$
contains $d'$ points (or fewer),
we must have
\begin{align*}
g' &\geq 0 \\
g + 1 - g' - n &= g'' \geq 0 \\
(r + 1) d' - rg' - r^2 - r &= \rho(d', g', r) \geq 0 \\
r(d - d') - (r - 1)(g - g') + (r - 1) n - r^2 + 1 &= \rho(d'', g'', r - 1) \geq 0 \\
n - 1 &\geq 0 \\
d' - n &\geq 0.
\end{align*}

In order to construct such reducible curves $C \cup_\Gamma D \to \pp^r$,
we first need to know when we can pass $f|_C$ and $f_D$ through a set $\Gamma \subset H$ of $n$ general points.
In this paper, we will focus on the case when these are guaranteed by results of \cite{ibe}
(although our method will be quite general and would in particular apply
whenever we have inequalities of a reasonable shape that guarantee this).
Namely,
Theorem~1.5 of~\cite{ibe} implies the hyperplane section of $f|_C$
can pass through $n$ general points subject to the inequality
\begin{equation} \label{cC}
(2r - 3) (d' + 1) - (r - 2)^2 (g' - d' + n) - 2r^2 + 3r - 9 \geq 0.
\end{equation}
In addition, by Theorem~1.2 of~\cite{ibe}, $f_D$ passes through $n$ general points provided that
\[(r - 2)n \leq r d'' - (r - 4)(g'' - 1) - 2r + 2;\]
or upon rearrangement,
\[r (d - d') - (r - 4)(g - g') - 2n - 2r + 2 \geq 0.\]

When all of these inequalities are satisfied, we can construct
such a curve $C \cup_\Gamma D \to \pp^r$;
but a priori, this curve may not be a BN-curve --- in fact, a priori, it may not even lie in a component of the Kontsevich space
whose generic member is a map from a smooth curve.
One can show,
as in the proof of Corollary~4.3
of \cite{hh} mutatis mutandis,
that when $f|_C$ and $f_D$ are general,
$C \cup_\Gamma D \to \pp^r$ admits a deformation which
is a map from a smooth curve
provided that
\begin{equation} \label{nC}
2n + d + g' - d' - g - r - 1 = n + d'' - g'' - r = n - (\dim H^1 (N_{f|_D}) + 1) \geq 0.
\end{equation}

In these terms, our first theorem shows that, if there exists an $n$ satisfying these
inequalities, with \eqref{cC} satisfied even when $d'$ is decreased by $1$
and \eqref{nC} strict,
then for the minimal such $n$, the resulting curve $C \cup_\Gamma D \to \pp^r$
is in fact a BN-curve.
Namely:

\begin{thm} \label{main}
Let $d$, $g$, $d'$, $g'$, and $r$ be integers
which satisfy:
\begin{align}
g' &\geq 0 \label{b} \\
(r + 1) d - rg - r^2 - r &\geq 0 \label{c} \\
(r + 1) d' - rg' - r^2 - r &\geq 0 \label{d} \\
\intertext{Suppose there exists an integer $n$ satisfying:}
(2r - 3) d' - (r - 2)^2 (g' - d' + n) - 2r^2 + 3r - 9 &\geq 0 \label{e} \\
g - g' - n + 1 &\geq 0 \label{f} \\
r(d - d') - (r - 1)(g - g') + (r - 1) n - r^2 + 1 &\geq 0 \label{g} \\
n - 1 &\geq 0 \label{h} \\
d' - n &\geq 0 \label{i} \\
r (d - d') - (r - 4)(g - g') - 2n - 2r + 2 &\geq 0 \label{j} \\
2n + d + g' - d' - g - r - 2 &\geq 0; \label{k}
\end{align}
let $n$ be the minimal such integer.
Then any curve $f \colon C \cup_\Gamma D \to \pp^r$
of degree $d$ and genus~$g$,
so that $f|_C$ is a general BN-curve of degree $d'$
and genus $g'$; and $f|_D$ factors
as $\iota \circ f_D$, for $\iota \colon H \hookrightarrow \pp^r$ the inclusion
of a hyperplane $H \subset \pp^r$,
and $f_D$ a general BN-curve; and
such that $f(\Gamma)$ is a general set of $n$ points in $H$,
is an interior BN-curve.
\end{thm}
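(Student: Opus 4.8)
The plan is to construct the map $f$ directly from the hypotheses, exhibit it on a component $\Sigma$ of $\bar{M}_g(\pp^r,d)$ whose generic member is a nondegenerate immersion of a smooth curve, and then show both that $\Sigma$ is the \emph{unique} component through $[f]$ and that it dominates $\bar{M}_g$. The key input for these last two points is a normal‑bundle vanishing of the form $H^1(N_f)=0$, and that is the one real obstacle.

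\emph{Construction and smoothing.} First I would verify that the inequalities actually produce the curve. By \eqref{b}, \eqref{d} and the Brill--Noether theorem there is a general BN-curve $f|_C\colon C\to\pp^r$ of degree $d'$ and genus $g'$; by \eqref{f} and \eqref{g} (the latter being $\rho(d'',g'',r-1)\ge 0$ with $d''=d-d'$, $g''=g+1-g'-n$) there is a general BN-curve $f_D\colon D\to H$. Inequalities \eqref{e}, \eqref{h}, \eqref{i} and Theorem~1.5 of \cite{ibe} let the hyperplane section of $f|_C$ pass through $n$ general points $\Gamma\subset H$, and \eqref{j} and Theorem~1.2 of \cite{ibe} let $f_D$ pass through the same $\Gamma$; gluing the two maps along $\Gamma$ yields $f\colon C\cup_\Gamma D\to\pp^r$ of degree $d$ and genus $g$, Brill--Noether valid by \eqref{c}, and nondegenerate because $f|_C$ already is. Running the argument of Corollary~4.3 of \cite{hh} mutatis mutandis for general $f|_C$ and $f_D$, inequality \eqref{k} --- which is precisely \eqref{nC} made strict --- then shows $f$ deforms to a map from a smooth curve, and, the two pieces being immersions meeting transversally along $\Gamma$, to an immersion. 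Hence $[f]$ lies on a component $\Sigma$ whose generic member is an immersion of a smooth curve, and, nondegeneracy being open, whose generic member is nondegenerate.

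\emph{The main step.} The heart of the proof is $H^1(N_f)=0$, where $N_f$ is the normal bundle of the stable map $f$ in the sense of \cite{rbn}. I would restrict to the two components and use the exact sequence
\[
0 \longrightarrow N_f \longrightarrow (N_f|_C)\oplus(N_f|_D) \longrightarrow N_f|_\Gamma \longrightarrow 0,
\]
the identification of $N_f|_C$ and $N_f|_D$ with $N_{f|_C}$ and $N_{f|_D}$ modified along $\Gamma$, and --- since $f|_D=\iota\circ f_D$ factors through $H$ --- the sequence $0\to N_{f_D}\to N_{f|_D}\to\mathcal{O}_D(1)\to 0$, to reduce the vanishing to: $H^1$ of suitable positive twists of $N_{f|_C}$, of $N_{f_D}$, and of $\mathcal{O}_D(1)$ all vanish, and the evaluation maps at the $n$ general points of $\Gamma$ are surjective. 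The vanishings for $N_{f|_C}$ and $N_{f_D}$ hold because $f|_C$ and $f_D$ are general BN-curves, for which the cohomology of such twists of the normal bundle is controlled by the interpolation results of \cite{ibe}; this is exactly where \eqref{e} (which strengthens \eqref{cC}) provides the room needed on the $C$-side and \eqref{g}, \eqref{j} on the $D$-side, while the surjectivity at $\Gamma$ and the twist on $\mathcal{O}_D(1)$ use that $\Gamma$ is general and that $n$ is minimal, so that $\Gamma$ imposes independent but non-redundant conditions. Granting $H^1(N_f)=0$: then $[f]$ is a smooth point of $\bar{M}_g(\pp^r,d)$, so $f$ is an interior curve; and, since $f|_C$ and $f_D$ are general BN-curves passing through general points, the differential of $\bar{M}_g(\pp^r,d)\to\bar{M}_g$ at $[f]$ --- which factors through the independent deformations of $(C,\Gamma_C)$ and $(D,\Gamma_D)$ carrying $f|_C$ and $f_D$ (surjective onto the tangent spaces of $\bar{M}_{g',n}$ and $\bar{M}_{g'',n}$) together with the $n$ node-smoothing directions (reached thanks to the strict inequality \eqref{k}) --- is surjective, as $3g-3=(3g'-3+n)+(3g''-3+n)+n$. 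So $\Sigma$ dominates $\bar{M}_g$ and $f$ is an interior BN-curve.

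\emph{The hard part.} The genuinely delicate step is $H^1(N_f)=0$: one must arrange the exact sequences so that the twists of $N_{f|_C}$, $N_{f_D}$ and $\mathcal{O}_D(1)$ forced by the $n$ points of $\Gamma$ land exactly inside the range where the interpolation theorems of \cite{ibe} apply, and so that the evaluation at $\Gamma$ has the expected rank with no unexpected drop coming from the interaction between the hyperplane $H$ and the transverse curve $f|_C$. The precise shapes of \eqref{e}, \eqref{g}, \eqref{j}, \eqref{k} and the minimality of $n$ are calibrated to make this work; everything else --- the construction, the smoothing, and the passage from the vanishing to ``interior BN-curve'' --- is routine given the cited results.
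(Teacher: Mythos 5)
Your proposal takes a direct route (construct $f$, prove $H^1(N_f)=0$, deduce smoothness/interiority, then surjectivity of the differential to $\bar{M}_g$), whereas the paper proves the theorem by a double induction on $n$: minimality of $n$ forces \eqref{g} or \eqref{k} to fail at $n-1$, and in each case one degenerates $f_D$ (resp.\ $f|_C$) further into a union with a rational curve, regroups the components, and applies the inductive hypothesis together with Theorems~1.6 and~1.9 and Lemmas~3.2--3.8 and~6.2 of \cite{rbn}. The difference is not just stylistic: the two steps you flag as the core of your argument are genuine gaps, and the second is essentially equivalent to the theorem itself.

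First, the vanishing $H^1(N_f)=0$. Restricting to $C$, what you need is (roughly) $H^1$ of $N_{f|_C}$ twisted \emph{down} by the $n$ points of $\Gamma_C\subset C\cap H$. These points are general in $H$, but they are not general points of $C$; the interpolation statements of \cite{ibe} you invoke control twists down at general points of the curve (Theorem~1.2) or the position of the hyperplane section in $H$ (Theorem~1.5), and neither supplies the vanishing for a twist down at points constrained to lie in a hyperplane section. No precise statement is given, and the role of \eqref{e}, \eqref{j} and the minimality of $n$ remains heuristic, whereas in the paper minimality is used in a sharp numerical way (failure of \eqref{g} or \eqref{k} at $n-1$ yields \eqref{otd} or \eqref{btf}, which licenses a specific further degeneration). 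Second, and more seriously, the dominance step. Even granting $H^1(N_f)=0$, interiority follows but dominance of $\bar{M}_g$ does not follow from the dimension count, since fibers of $\bar{M}_g(\pp^r,d)\to\bar{M}_g$ over curves special in moduli can have excess dimension --- this is precisely the a priori danger the theorem rules out. Your proposed remedy, factoring the differential through deformations of $(C,\Gamma_C)$ and $(D,\Gamma_D)$ and claiming surjectivity onto $T\bar{M}_{g',n}\oplus T\bar{M}_{g'',n}$, is unjustified and in much of the theorem's range false: in any node-preserving deformation, $f|_D$ (which has $\rho(d'',g'',r)$ typically very negative as a curve in $\pp^r$) essentially must stay in a hyperplane, so $\Gamma_C$ must remain in a hyperplane section of $C$; since $n\geq r+3$, and $n$ may exceed $d'-g'$ (indeed may equal $d'$ by \eqref{i}), such pointed curves $(C,\Gamma_C)$ sweep out a proper subvariety of $M_{g',n}$, so the claimed surjectivity fails and the dominance argument collapses. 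The paper avoids this tangent-space computation entirely, which is exactly why it proceeds by induction and degeneration through the gluing criteria of \cite{rbn}.
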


\begin{rem} If $(2r - 3) (d' + 1) - (r - 2)^2 g' - 2r^2 + 3r - 9 \geq 0$,
then Theorem~1.5 of~\cite{ibe} implies the hyperplane section of $f|_C$
is general. If $r \geq 4$, this implies the general such reducible
curve is an immersion.
So we get a curve in the boundary of the component of the Hilbert
scheme corresponding to BN-curves, as opposed to just for
the Kontsevich space.
\end{rem}

In the course of proving Theorem~\ref{main}, we also establish the following
slight variant
(which yields the same conclusion subject to a slightly different system
of inequalities):

\begin{thm} \label{main-p}
Let $d$, $g$, $d'$, $g'$, and $r$ be integers
which satisfy:
\begin{align}
g' &\geq 0 \tag{\ref*{b}$'$} \label{bp} \\
(r + 1) d - rg - r^2 - r &\geq 0 \tag{\ref*{c}$'$} \label{cp} \\
(r + 1) d' - rg' - r^2 - r &\geq 0 \tag{\ref*{d}$'$} \label{dp} \\
\intertext{Suppose there exists an integer $n$ satisfying:}
(2r - 3) (d' + 1) - (r - 2)^2 (g' - d' + n) - 2r^2 + 3r - 9 &\geq 0 \tag{\ref*{e}$'$} \label{ep} \\
g - g' - n &\geq 0 \tag{\ref*{f}$'$} \label{fp} \\
r(d - d') - (r - 1)(g - g') + (r - 1) n - r^2 &\geq 0 \tag{\ref*{g}$'$} \label{gp} \\
n - 1 &\geq 0 \tag{\ref*{h}$'$} \label{hp} \\
d' - n &\geq 0 \tag{\ref*{i}$'$} \label{ip} \\
r (d - d') - (r - 4)(g - g') - 2n - 2r - 2 &\geq 0 \tag{\ref*{j}$'$} \label{jp} \\
2n + d + g' - d' - g - r - 2 &\geq 0; \tag{\ref*{k}$'$} \label{kp}
\end{align}
let $n$ be the minimal such integer.
Then any curve $f \colon C \cup_\Gamma D \to \pp^r$
of degree $d$ and genus~$g$,
so that $f|_C$ is a general BN-curve of degree $d'$
and genus $g'$; and $f|_D$ factors
as $\iota \circ f_D$, for $\iota \colon H \hookrightarrow \pp^r$ the inclusion
of a hyperplane $H \subset \pp^r$,
and $f_D$ a general BN-curve; and
such that $f(\Gamma)$ is a general set of $n$ points in $H$,
is an interior BN-curve.
\end{thm}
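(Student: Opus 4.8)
The plan is to realize $f \colon C \cup_\Gamma D \to \pp^r$ as a limit of BN-curves by a dimension-count/smoothing argument, following the template of \cite{rbn} and Corollary~4.3 of \cite{hh}. First I would observe that inequality \eqref{k} (resp.\ \eqref{kp}) guarantees, via \eqref{nC}, that $C \cup_\Gamma D \to \pp^r$ admits a deformation to a map from a smooth curve; hence it lies in \emph{some} component $X$ of $\bar M_g(\pp^r, d)$ whose generic member is a map from a smooth curve. The goal is to show $X$ is the BN-component, i.e.\ that $X$ dominates $\bar M_g$ and that a general member of $X$ is nondegenerate; and that $C \cup_\Gamma D \to \pp^r$ lies on no other component. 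The standard way to do this is to bound $\dim X$ above: since $f|_C$ ranges over a general BN-curve (contributing $\dim$ of the BN-component in $\pp^r$), $f_D$ over a general BN-curve in $H \cong \pp^{r-1}$, and $f(\Gamma)$ over $n$ general points of $H$ with the gluing data, one computes the dimension of the locus $Z$ of such reducible stable maps and checks it equals (or exceeds) the expected dimension $\dim \bar M_g(\pp^r,d) = (r+1)d - (r-3)(g-1)$ minus the appropriate correction — equivalently, that $h^1$ of the normal sheaf of $f$ along $C \cup_\Gamma D$ behaves as on the BN-component.

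Concretely, the key steps would be: (i) compute $h^0$ and $h^1$ of the normal sheaf (or the relevant deformation space) of $f \colon C \cup_\Gamma D \to \pp^r$ using the exact sequence relating $N_f$ to $N_{f|_C}$, $N_{f_D}$ (inside $H$), the normal bundle $N_{H/\pp^r}$ restricted to $D$, and the gluing/nodal contributions at $\Gamma$ — this is where inequalities \eqref{e}, \eqref{g}, \eqref{j} (which encode that $f|_C \cap H$ and $f_D$ pass through $n$ general points, via Theorems~1.2 and~1.5 of \cite{ibe}) get used, since they control the $H^1$'s of the twisted normal bundles that appear. (ii) Use \eqref{d}, resp.\ the hypothesis that $f|_C$ is a general BN-curve, to know the deformations of $C$ inside $\pp^r$ already sweep out a dense subset of $\bar M_{g'}$; combine with the smoothing to deduce $X$ dominates $\bar M_g$. (iii) The minimality of $n$ is what forces the construction to actually land on the BN-component rather than a larger-dimensional component: if $n$ were not minimal one could have too many points imposing independent conditions and the limit could degenerate; one argues that for the minimal $n$ the dimension count is exactly right, so $X$ is the unique component through $f$ and it is the BN-component. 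This is essentially the strategy of \cite{rbn}, with \eqref{e}–\eqref{k} replacing the ad hoc inequalities there.

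The version \eqref{main-p} with the primed inequalities should follow from the same argument run with $d'$ replaced by $d'+1$ in the relevant place (note \eqref{ep} is \eqref{e} with $d'+1$ for $d'$), i.e.\ one allows the hyperplane section of $f|_C$ one more point of freedom; the shifts in \eqref{fp}, \eqref{gp}, \eqref{jp} relative to \eqref{f}, \eqref{g}, \eqref{j} reflect the corresponding bookkeeping change, and the two theorems are proved in parallel, with whichever system of inequalities is available being invoked.

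The main obstacle I expect is step (i): carefully computing the cohomology of the normal sheaf of the reducible stable map, in particular correctly accounting for the node contributions at $\Gamma$, the interaction between $N_{f|_C}$ and the hyperplane $H$ (transversality of $f|_C$ to $H$), and verifying that the relevant $H^1$ vanishes — this vanishing is exactly what \eqref{e}, \eqref{g}, \eqref{j} are engineered to give through the cited results of \cite{ibe}, but assembling the long exact sequences and checking the numerics match up (so that $h^1(N_f)$ equals its value on the BN-component, giving smoothness of $X$ at $f$ of the expected dimension, hence interiority) is the technical heart. A secondary subtlety is ruling out that $C \cup_\Gamma D \to \pp^r$ lies on a spurious component, which again reduces to the dimension count being sharp for the minimal $n$.
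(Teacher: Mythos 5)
There is a genuine gap: your plan rests on a dimension count plus a vanishing statement for the normal sheaf of the reducible map ("$h^1(N_f)$ behaves as on the BN-component"), but that is precisely what fails in this setting and what the actual argument is built to avoid. Because $f|_D$ factors through the hyperplane $H$, the map $f_D$ is degenerate as a curve in $\pp^r$ and $H^1(N_{f|_D})$ is typically nonzero --- this is visible in the paper's inequality \eqref{nC}, which involves $\dim H^1(N_{f|_D}) + 1$ rather than asserting vanishing. So you cannot expect $H^1(N_f) = 0$, and without it the smoothability claim plus a dimension bound does not identify the component: even if $f$ lay on a unique component of the expected dimension, nothing in your steps (ii)--(iii) shows that component dominates $\bar{M}_g$ or has nondegenerate generic member. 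Your step (iii) also misreads the role of minimality of $n$: it is not a sharpness statement for a dimension count. In the paper, minimality is used because when $n$ is replaced by $n-1$ some inequality must fail, and that failed inequality supplies exactly the numerical room (e.g.\ \eqref{otd}, \eqref{btf}) needed to perform specific degenerations; it is also needed so that the modified parameter sets still have $n$ minimal, which is required to invoke the inductive hypotheses.

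The paper's proof of Theorem~\ref{main-p} is structurally different from "run the same argument with $d'+1$": it is one leg of a double induction on $n$ whose base case is Theorem~1.9 of \cite{rbn} ($n \le r+2$). To deduce Theorem~\ref{main-p} from Theorem~\ref{main} at the same $n$, one specializes $f_D$ inside $H$ to $D' \cup_{\{p,q\}} \pp^1$ (a line attached at two general points, lowering the genus and degree by one), arranges $\Gamma$ so that two of its points lie on the line, verifies the required $H^1$-vanishings via Lemmas~3.2--3.4, 3.6 and 6.2 of \cite{rbn} together with \eqref{kp}, and then re-associates the nodal curve as $(C \cup_{\Gamma_1} \pp^1) \cup_{\Gamma_2 \cup \{p,q\}} D'$, so that the subcurve $C \cup_{\Gamma_1} \pp^1$ is a BN-curve of degree $d'+1$ and genus $g'+1$; one then checks the unprimed inequalities hold for $(d,g,d'+1,g'+1,n)$ with $n$ still minimal and applies Theorem~\ref{main}. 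Your intuition that \eqref{ep} is \eqref{e} with $d'$ replaced by $d'+1$ correctly anticipates this bookkeeping, but the attachment-of-a-line degeneration, the re-association trick, and the transversality/generality statements needed to glue (Lemma~6.2 of \cite{rbn} via \eqref{for62}) are the actual content, and none of them appear in your outline.
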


\medskip

Our second goal is to prove the following theorem, which
sharpens Theorem~1.4 of~\cite{rbn} in the case $r = 3$:

\begin{thm} \label{main-3}
Let $f_i \colon C_i \to \pp^3$ (for $i \in \{1, 2\}$) be BN-curves
of degree $d_i$ and genus $g_i$, which pass through
a set $\Gamma \subset \pp^3$ of $n \geq 1$ general points.

Then $C_1 \cup_\Gamma C_2 \to \pp^3$ is a BN-curve, provided it has nonnegative Brill--Noether number,
unless $n = 2 d_1 = 2 d_2$.

(Since $C_1 \cup_\Gamma C_2 \to \pp^3$ is of degree $d_1 + d_2$ and genus $g_1 + g_2 + n - 1$,
the condition of having
nonnegative Brill--Noether number is just $4(d_1 + d_2) - 3(g_1 + g_2 + n - 1) - 12 \geq 0$.)

Furthermore, if both $f_i$ are general in some component of the space of BN-curves
passing through $\Gamma$, then $C_1 \cup_\Gamma C_2 \to \pp^3$ is an interior BN-curve.
\end{thm}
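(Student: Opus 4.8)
The plan is to prove Theorem~\ref{main-3} by a deformation argument, degenerating the node-gluing $C_1 \cup_\Gamma C_2$ within the Hilbert scheme (or Kontsevich space) and tracking the component it lands in. First I would set up the basic dimension count: the locus of stable maps of the form $C_1 \cup_\Gamma C_2 \to \pp^3$, with $\Gamma$ a general set of $n$ points and the $f_i$ general BN-curves through $\Gamma$, has dimension equal to $\dim(\text{BN-curves through }\Gamma \text{ of type }(d_1,g_1)) + \dim(\text{same for }(d_2,g_2)) + \dim(\text{choices of }\Gamma) + (\text{gluing data})$; comparing this against $\dim \bar M_{g_1+g_2+n-1}(\pp^3, d_1+d_2)$ at a point corresponding to a smooth BN-curve should show that the reducible locus has the expected codimension, which is the numerical content of the hypothesis $\rho \geq 0$. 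The key input is that passing a BN-curve in $\pp^3$ through $n$ general points drops the dimension of the BN-locus by exactly $3n$ as long as $n \leq 2d$ (and $n \leq 9$ in the two exceptional cases), which is exactly Theorem~\ref{thm11}; the exclusion $n = 2d_1 = 2d_2$ in the statement is precisely where this fails to be "general position" simultaneously for both factors.

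Next I would carry out the smoothing step: show $C_1 \cup_\Gamma C_2 \to \pp^3$ admits a deformation to a map from a smooth curve. This is where one invokes the normal-bundle / $H^1$ vanishing machinery — the relevant obstruction space is $H^1$ of the normal sheaf of the reducible stable map, which decomposes via a Mayer--Vietoris sequence over the nodes $\Gamma$ into contributions from $N_{f_1}$, $N_{f_2}$, and the gluing at $\Gamma$; since the $f_i$ are BN-curves passing through general points, the twisted normal bundles $N_{f_i}(-\Gamma_i)$ have the expected cohomology (this is essentially the content behind Theorem~\ref{thm11} and is available from \cite{vogt} or \cite{rbn}), and one concludes the reducible curve is unobstructed and smoothable, lying in a unique component whose general member is a smooth curve — i.e. it is an interior curve. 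I would model this on Corollary~4.3 of \cite{hh} and the analogous arguments in \cite{rbn}, just in the special case $r = 3$, where the bound $n \leq 2d_i$ is exactly what is needed for the interpolation.

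Finally, to upgrade from "interior, smoothable" to "BN-curve" I must show the unique component it lies in dominates $\bar M_{g}$ with $g = g_1 + g_2 + n - 1$. The cleanest route is to observe that the general smooth deformation has the same (expected) number of moduli as the reducible curve, and that the reducible curve already dominates $\bar M_{g_1} \times \bar M_{g_2} \times (\text{something accounting for the }n\text{ glued points})$, which maps dominantly onto (an open subset of) $\bar M_{g}$ via the clutching maps $\bar M_{g_1, n} \times \bar M_{g_2, n} \to \bar M_{g}$ — using that $\bar M_{g_1, n}$ and $\bar M_{g_2, n}$ are themselves dominated by the BN-curve loci by hypothesis (the $f_i$ are BN-curves). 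Since a reducible nodal curve with $n$ nodes joining two components of genera $g_1, g_2$ fills up a codimension-$n$ boundary stratum and smoothing the nodes recovers a dense open of $\bar M_g$, the dominance transfers.

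\medskip

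The main obstacle I expect is the smoothing/unobstructedness step: precisely controlling $H^1$ of the normal sheaf of the reducible stable map at the $n$ glued nodes, and verifying that generality of $\Gamma$ (together with the BN-curve hypothesis on each $f_i$) forces the gluing to be "as transverse as possible." The exceptional cases $(d_i, g_i) \in \{(5,2),(6,4)\}$ and the excluded locus $n = 2d_1 = 2d_2$ are exactly the places where this transversality can fail, so the argument must handle — or explicitly exclude — those, and checking that the bound $n \leq 2d_i$ coming from Theorem~\ref{thm11} suffices everywhere else is the delicate part. A secondary subtlety is ensuring the component is genuinely \emph{unique} (interiority), which requires knowing that no other component of the Hilbert scheme or Kontsevich space can contain this reducible curve — this follows once the expected-dimension count is shown to be exact, but that exactness is again powered by the sharp interpolation statement of Theorem~\ref{thm11}.
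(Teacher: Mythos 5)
There is a genuine gap, and it sits exactly at the step you dismiss with ``the dominance transfers.'' Your sketch establishes, at best, that the reducible stable map is smoothable and lies in a unique component of the expected dimension $4(d_1+d_2)$; it never shows that this component dominates $\bar{M}_g$ with nondegenerate general member, i.e.\ that the curve is a \emph{BN-curve}, which is the actual content of Theorem~\ref{main-3}. Knowing that the reducible source curves fill up the codimension-$n$ boundary stratum of $\bar{M}_g$ does not imply that deformations of the \emph{map} realize the smoothings of the nodes of the source: for that you need surjectivity of the forgetful map on deformation spaces at (a general point of) the component, which is an additional cohomological condition involving $f^*T_{\pp^3}$ twisted down at the nodes, and is neither implied by $H^1(N_f)=0$ nor by your dimension count (a component of the same dimension whose general member is a smooth curve of non-general moduli, or a degenerate curve, is not excluded by any of your counts). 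Identifying the component is precisely the hard point, and the paper does not attack it by direct deformation theory at the node-glued curve: it argues by induction on $n$ (and on $\rho_1$ when $\rho_1\le\rho_2\le 3$), using Theorem~1.6 of \cite{rbn} to degenerate one factor $f_1$ to $C_1'\cup\pp^1$ with a line, using Theorem~\ref{thm11} to redistribute the $n$ general points onto $C_1'$ and the line --- this is where the hypothesis $\rho\ge 0$ (rearranged as \eqref{n-upper}) and the exclusion $n=2d_1=2d_2$ are actually consumed --- and then regrouping the components as $C_1'\cup_{\Gamma'\cup\{p\}}(\pp^1\cup_{\{x,y\}}C_2)$, so that Lemma~3.5 and Theorem~1.6 of \cite{rbn} together with the inductive hypothesis (a union with \emph{fewer} gluing points, already known to be BN) identify the curve as a BN-curve. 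Without such a reduction, or a genuinely new argument for the surjectivity above, your proof does not reach the conclusion.

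Two secondary inaccuracies: the excluded case $n=2d_1=2d_2$ is not where interpolation ``fails to be general position simultaneously'' --- Theorem~\ref{thm11} allows $n=2d_i$ --- rather, the exclusion is what lets the paper assume $n\le 2d_2-1$ after handling $n=2d_1$, so that a line can be peeled off one factor while the remaining points still satisfy the interpolation bound for $(d_1-1,g_1)$ or $(d_1-1,g_1-1)$; and the vanishing $H^1(N_{f_i}(-\Gamma_i))=0$ you want for unobstructedness is not a formal consequence of the $f_i$ being BN-curves through general points (it is delicate exactly near $n=2d_i$ and for the exceptional pairs $(5,2),(6,4)$), whereas the paper's route never needs it. Your appeal to Corollary~4.3 of \cite{hh} addresses smoothability, which is not the bottleneck here; in this paper that tool is only relevant to the setup of Theorems~\ref{main} and~\ref{main-p}.
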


Several cases of Theorems~\ref{main} and~\ref{main-p} are already known: The case
$n \leq r + 2$ follows from Theorem~1.9 of~\cite{rbn};
the cases $r = 1$ and $r = 2$ follow from classical results
on the irreducibility of the Hurwitz space (c.f.\ \cite{cle})
and of the Severi variety (c.f.\ \cite{severi}).
\emph{We will therefore assume for the proof of Theorems~\ref{main} and~\ref{main-p} that:}
\begin{align}
r &\geq 3 \label{r3} \\
n &\geq r + 3. \label{nr3}
\end{align}

Since $\Gamma$ is a general set of points,
we may deform the curve $f$ appearing in Theorems~\ref{main} and~\ref{main-p}
to assume that $(f_D, \Gamma)$ is general in the component of $M_{g'', n}(H, d'')$
corresponding to BN-curves,
and that $f|_C$ is general in the component of
$M_{g'}(\pp^r, d')$
corresponding to BN-curves (hence is transverse to $H$).
Similarly, we may deform the curves $f_i$
appearing in Theorem~\ref{main-3}
to assume that the $(f_i, \Gamma)$ are both general
in the component of $M_{g_i, n}(\pp^3, d_i)$
corresponding to BN-curves.
In particular, by \eqref{r3}, we have
that $f$ is unramified in Theorems~\ref{main} and~\ref{main-p},
and that both $f_i$ are unramified in Theorem~\ref{main-3}.

We shall prove Theorems~\ref{main} and~\ref{main-p}
by simultaneous induction on $n$,
with Theorem~1.9 of~\cite{rbn}
(which implies both theorems when $n \leq	r + 2$)
serving as the base case.
Namely, we show first, in Section~\ref{main-to-main-p},
that Theorem~\ref{main} for any given value of $n$
implies Theorem~\ref{main-p} for the same value of $n$;
then, in Section~\ref{main-p-to-main},
we show that Theorems~\ref{main} and~\ref{main-p}
for any given value of $n$ imply Theorem~\ref{main}
for $n + 1$.
For Theorem~\ref{main-3}, our argument will also be by induction on $n$.

All of these inductive arguments will
use the strategy developed in \cite{rbn}
for showing certain reducible curves $C' \cup_\Gamma C'' \to \pp^r$
are BN-curves.
An overview of this strategy is given in
Section~3, part~II of the research announcement \cite{over}
for a series of papers --- including the present paper and \cite{rbn} --- which
builds up to a proof of the Maximal Rank
Conjecture. (This section of the research announcement may
be read independently from the remainder.)

\paragraph{Note:} Throughout this paper, we work over an algebraically
closed field of characteristic zero.

\subsection*{Acknowledgements}

The author would like to thank Joe Harris for
his guidance throughout this research,
as well as other members of the Harvard and MIT mathematics departments
for helpful conversations.
The author would also like
to acknowledge the generous
support both of the Fannie and John Hertz Foundation,
and of the Department of Defense
(NDSEG fellowship).

\section{Theorem~\ref{main} implies Theorem~\ref{main-p} \label{main-to-main-p}}

In this section, we show that Theorem~\ref{main} for a given value of $n$
implies Theorem~\ref{main-p} for the same value of $n$.

From \eqref{fp}, we have $g'' - 1 \geq 0$;
and from \eqref{gp}, we have
$\rho(d'' - 1, g'' - 1, r - 1) \geq 0$.
We may therefore (using Theorem~1.6 of~\cite{rbn}) specialize $f_D$ to a map from a reducible curve
$f_D^\circ \colon D' \cup_{\{p, q\}} \pp^1$,
with $D'$ of genus $g'' - 1$ and
$f_D^\circ|_{D'}$ of degree $d'' - 1$;
and $f_D^\circ|_{\pp^1}$ of degree~$1$;
and $\{p, q\}$ a set of two general points on $D'$.
By \eqref{jp} and Theorem~1.2 of~\cite{ibe},
$f_D^\circ|_{D'}$ can pass through $n$ general points;
in particular, we may specialize so that $\Gamma = \Gamma_1 \cup \Gamma_2$
is a set of $n$ general points consisting of
a set $\Gamma_1$ of $2$ points on $f_D^\circ(\pp^1)$,
and a set $\Gamma_2$ of $n - 2$ points on $f_D^\circ(\pp^1)$.
Note that since $\Gamma_1 \cup \{p, q\}$ is general,
$\Gamma_1$ and $\Gamma_2$ are independently general.

As in Lemma~3.6 of~\cite{rbn},
it suffices to show the resulting curve
$f^\circ \colon C \cup_{\Gamma_1 \cup \Gamma_2} (D' \cup_{\{p, q\}} \pp^1) \to \pp^r$
is a BN-curve and
$H^1(N_{f_D^\circ}) = H^1((f_D^\circ)^* \oo_{\pp^r}(1)(\Gamma_1 + \Gamma_2)) = 0$.
The vanishing of $H^1(N_{f_D^\circ})$ follows by combining
Lemmas~3.2, 3.3, and~3.4 of~\cite{rbn}.
Moreover the exact sequence
\[0 \to f_D^\circ|_{\pp^1}^* \oo_{\pp^r}(1)(\Gamma_1 - p - q) \simeq \oo_{\pp^1}(1) \to (f_D^\circ)^* \oo_{\pp^r}(1)(\Gamma_1 + \Gamma_2) \to f_D^\circ|_{D'}^* \oo_{\pp^r}(1)(\Gamma_2) \to 0\]
reduces the vanishing of $H^1((f_D^\circ)^* \oo_{\pp^r}(1)(\Gamma_1 + \Gamma_2))$
to the vanishing of
$H^1(f_D^\circ|_{D'}^* \oo_{\pp^r}(1)(\Gamma_2))$;
this in turn from
Lemma~6.2 of~\cite{rbn}, together with \eqref{kp}
which becomes upon rearrangement
(using \eqref{r3}):
\begin{equation} \label{for62}
(d'' - 1) - (g'' - 1) + (n - 2) \geq \max(2, r - 1) = r - 1.
\end{equation}

It thus remains to show $f^\circ$ is a BN-curve.
For this, we write $f^\circ$ as
\[f^\circ \colon  (C \cup_{\Gamma_1} \pp^1) \cup_{\Gamma_2 \cup \{p, q\}} D' \to \pp^r.\]

Note that each inequality ($k'$) for $(d, g, d', g', n)$ implies the
corresponding inequality
($k$) for $(d, g, d' + 1, g' + 1, n)$. Moreover,
each inequality $(k)$ for $(d, g, d' + 1, g' + 1, n - 1)$ implies the inequality
($k'$) for $(d, g, d', g', n - 1)$,
except for $k \in \{\ref{b}, \ref{d}, \ref{i}\}$
when ($k'$) for $(d, g, d', g', n - 1)$
follows from ($k'$) for $(d, g, d', g', n)$.
Thus, $(d, g, d' + 1, g' + 1, n)$ satisfies the inequalities of Theorem~\ref{main},
and $n$ is minimal with that property.

Note that $f^\circ|_{C \cup_{\Gamma_1} \pp^1}$
is a BN-curve by Theorem~1.6 of \cite{rbn}.
Showing that $f^\circ$ is a BN-curve
thus follows from
Theorem~\ref{main} (with the same value of $n$), since
$f^\circ|_{C \cup_{\Gamma_1} \pp^1}$ admits a deformation
still passing through $\Gamma_2 \cup \{p, q\}$
which is transverse to $H$ along $\Gamma_2 \cup \{p, q\}$
by Lemma~6.2 of \cite{rbn} together with \eqref{for62}.

\section{Proof of Theorem~\ref{main} \label{main-p-to-main}}

In this section, we show that Theorems~\ref{main} and~\ref{main-p} for $n - 1$
imply Theorem~\ref{main} for $n$. Together with the inductive
argument in the previous section, this will complete
the proofs of both Theorems~\ref{main} and~\ref{main-p}.

Since by assumption, $n$ is minimal subject to the system of inequalities
in Theorem~\ref{main}, one of these inequalities must cease to hold
when $n$ is replaced by $n - 1$.
Note that all inequalities except for
\eqref{g}, \eqref{h}, and \eqref{k} are nonincreasing in $n$,
and that \eqref{h}
continues to hold
when $n$ is replaced by $n - 1$
by \eqref{nr3}.
We must therefore be in one of two cases:

\paragraph{\boldmath Case 1: \eqref{k} ceases to hold when $n$ is replaced by $n - 1$:}
In other words, we have
\[2(n - 1) + d + g' - d' - g - r - 2 \leq -1.\]
Subtracting $r$ times this inequality from \eqref{g}, we obtain
upon rearrangement
\begin{equation} \label{btf}
g - g' - n + 1 \geq r(n - 3).
\end{equation}
In particular, combining this with \eqref{nr3},
the genus $g'' = g - g' - n + 1$ satisfies
\begin{equation} \label{btff}
g - g' - n + 1 \geq r.
\end{equation}
As \eqref{gp} implies
$\rho(d'' - r + 1, g'' - r, r - 1) \geq 0$,
we may therefore (using Theorem~1.6 of~\cite{rbn}) specialize $f_D$ to a map from a reducible curve
$f_D^\circ \colon D' \cup_\Delta \pp^1$,
with $D'$ of genus $g'' - r$ and
$f_D^\circ|_{D'}$ of degree $d'' - r + 1$;
and $f_D^\circ|_{\pp^1}$ of degree~$r - 1$;
and $\Delta$ a set of $r + 1$ points.

By Theorem~1.2 of~\cite{ibe},
$f_D^\circ|_{D'}$ can pass through $n - 1$ general points
provided that
\[(r - 2)(n - 1) \leq r (d'' - r + 1) - (r - 4)(g'' - r - 1) - 2(r - 1);\]
or substituting in $d'' = d - d'$ and $g'' = g - g' - n + 1$
and rearranging, provided that
\begin{equation} \label{btj}
r(d - d') - (r - 4)(g - g') - 2n - 4r \geq 0,
\end{equation}
which follows by adding
$\eqref{g} + 3 \cdot \eqref{btf} + (2r + 2) \cdot \eqref{nr3}$
to $3r^2 - 5r + 2 \geq 0$.

Note that $f_D^\circ|_{D'}$
can always pass through $r + 1$
general points in $H$, since $r + 1 \leq n - 1$ by \eqref{nr3},
and that $f_D^\circ|_{\pp^1}$ can pass through $r + 2$
general points in $H$ by Corollary~1.4 of \cite{aly}.
We may therefore degenerate so that
$f_D^\circ$
still passes through a set $\Gamma = \Gamma' \cup \{p\}$
of general points in $H$,
with $\# \Gamma' = n - 1 > 0$,
such that $f_D^\circ|_{D'}$ passes through $\Gamma'$
and $f_D^\circ|_{\pp^1}$ passes through $p$,
and such that $\Delta \cup \{p\}$
is a general set of $r + 2$ points in $H$.

As in Lemma~3.6 of~\cite{rbn},
it suffices to show the resulting curve
$f^\circ \colon C \cup_{\Gamma' \cup \{p\}} (D' \cup_\Delta \pp^1) \to \pp^r$
is a BN-curve and
$H^1(N_{f_D^\circ}) = H^1((f_D^\circ)^* \oo_{\pp^r}(1)(\Gamma' + p)) = 0$.
The vanishing of $H^1(N_{f_D^\circ})$ follows by combining
Lemmas~3.2, 3.3, and~3.4 of~\cite{rbn}.
Moreover the exact sequence
\[0 \to f_D^\circ|_{\pp^1}^* \oo_{\pp^r}(1)(p - \Delta) \simeq \oo_{\pp^1}(-1) \to (f_D^\circ)^* \oo_{\pp^r}(1)(\Gamma' + p) \to f_D^\circ|_{D'}^* \oo_{\pp^r}(1)(\Gamma') \to 0\]
reduces the vanishing of $H^1((f_D^\circ)^* \oo_{\pp^r}(1)(\Gamma' + p))$
to the vanishing of
$H^1(f_D^\circ|_{D'}^* \oo_{\pp^r}(1)(\Gamma'))$;
this follows in turn from Lemma~6.2 of \cite{rbn},
together with \eqref{k} which becomes upon rearrangement
\begin{equation} \label{for62ii}
(d'' - r + 1) - (g'' - r) + (n - 1) \geq \max(r + 1, r - 1) = r + 1.
\end{equation}

It thus remains to show $f^\circ$ is a BN-curve.
For this, we write $f^\circ$ as
\[f^\circ \colon  (C \cup_{\Gamma'} D') \cup_{\Delta \cup \{p\}} \pp^1 \to \pp^r.\]

Next, note that each inequality ($k$) for $(d, g, d', g', n)$ implies the
same inequality
($k$) for $(d - r + 1, g - r - 1, d', g', n - 1)$ --- except
for $k = \ref{f}$ when \eqref{f} for $(d - r + 1, g - r - 1, d', g', n - 1)$ follows from \eqref{btff},
for $k = \ref{h}$ when \eqref{h} for $(d - r + 1, g - r - 1, d', g', n - 1)$ follows from \eqref{nr3}, and
for $k = \ref{j}$ when \eqref{j} for $(d - r + 1, g - r - 1, d', g', n - 1)$ follows from \eqref{btj}.
Moreover,
each inequality $(k)$ for $(d - r + 1, g - r - 1, d', g', n - 2)$ implies the
same inequality
($k$) for $(d, g, d', g', n - 1)$,
except for $k \in \{\ref{c}, \ref{e}, \ref{i}\}$
when $(k)$ for $(d, g, d', g', n - 1)$ follows from $(k)$ for $(d, g, d', g', n)$).
Thus, $(d - r + 1, g - r - 1, d', g', n - 1)$
satisfies the inequalities of Theorem~\ref{main},
and $n$ is minimal with that property.

Consequently,
$f^\circ|_{C \cup_{\Gamma'} D'}$ is a BN-curve
by our inductive hypothesis for Theorem~\ref{main}.
Showing that $f^\circ$ is a BN-curve
thus follows from Theorem~1.9 of \cite{rbn}, since
$f^\circ|_{C \cup_{\Gamma'} D'}$ admits a deformation
still passing through $\Delta \cup \{p\}$
which is transverse to $H$ along $\Gamma' \cup \{p\}$
by Lemma~6.2 of \cite{rbn} together with \eqref{for62ii}.

\paragraph{\boldmath Case 2: \eqref{k} continues to hold when $n$ is replaced by $n - 1$, but \eqref{g} ceases to hold:}
Since \eqref{g} ceases to hold, we have
\[r(d - d') - (r - 1)(g - g') + (r - 1) (n - 1) - r^2 + 1 \leq -1.\]
Subtracting $(r + 1)$ times this equation from $r \cdot \eqref{c} + \eqref{f}$
and adding $r + 2 \geq 0$,
we obtain upon rearrangement
\begin{equation} \label{otd}
\rho(d' - 1, g', r) = (r + 1) (d' - 1) - r g' - r (r + 1) \geq 0.
\end{equation}

We may therefore (using Theorem~1.6 of~\cite{rbn}) specialize $f|_C$ to a map from a reducible curve
$f|_C^\circ \colon C' \cup_{\{p\}} \pp^1$,
with $C'$ of genus $g'$ and
$f|_C^\circ|_{C'}$ of degree $d' - 1$;
and $f|_C^\circ|_{\pp^1}$ factoring through $H$
of degree~$1$.

By Theorem~1.5 of~\cite{ibe} together with our assumption \eqref{e},
the hyperplane section of
$f|_C^\circ|_{C'}$ contains $n - 1$ general points;
and by inspection, $f|_C^\circ|_{\pp^1}$
passes through $2$ general points in $H$.
We may therefore degenerate so that
$f|_C^\circ$
still passes through a set $\Gamma = \Gamma' \cup \{q_1, q_2\}$
of general points in $H$,
with $\# \Gamma' = n - 2 > 0$ (c.f.\ \eqref{nr3}),
such that $f|_C^\circ|_{C'}$ passes through $\Gamma'$
and $f|_C^\circ|_{\pp^1}$ passes through $\{q_1, q_2\}$,
and such that $\Gamma' \cup \{f|_C^\circ(p)\}$
is a general set of $n - 1$ points in $H$.

As in Lemma~3.8 of~\cite{rbn},
it suffices to show 
$f^\circ \colon (C' \cup_{\{p\}} \pp^1) \cup_{\Gamma' \cup \{q_1, q_2\}} D \to \pp^r$
is a BN-curve,
$H^1(N_{f|_C^\circ}(-\Gamma' - q_1 - q_2)) = 0$, and
$d'' \geq g'' + r - 1 - (n - 2)$ (which upon rearrangement is exactly \eqref{k}).
The vanishing of $H^1(N_{f|_C^\circ}(-\Gamma' - q_1 - q_2))$ follows by combining
Lemmas~3.2, 3.3, and~3.4 of~\cite{rbn}.

It thus remains to show $f^\circ$ is a BN-curve.
For this, we write $f^\circ$ as
\[f^\circ \colon  C' \cup_{\Gamma' \cup \{p\}} (D \cup_{\{q_1, q_2\}} \pp^1) \to \pp^r.\]

Next, note that each inequality ($k$) for $(d, g, d', g', n)$ implies the
corresponding inequality
($k'$) for $(d, g, d' - 1, g', n - 1)$ --- except
for $k = \ref{d}$ when \eqref{dp} for $(d, g, d' - 1, g', n - 1)$ follows from \eqref{otd},
for $k = \ref{h}$ when \eqref{hp} for $(d, g, d' - 1, g', n - 1)$ follows from \eqref{nr3},
and for $k = \ref{k}$ when \eqref{kp} for $(d, g, d' - 1, g', n - 1)$ follows from \eqref{k} for $(d, g, d', g', n - 1)$.
Moreover,
each inequality $(k')$ for $(d, g, d' - 1, g', n - 2)$ implies the
corresponding inequality
($k$) for $(d, g, d', g', n - 1)$,
except for $k = \ref{j}$
when $\eqref{j}$ for $(d, g, d', g', n - 1)$ follows from $\eqref{j}$
for $(d, g, d', g', n)$.
Thus, $(d, g, d' - 1, g', n - 1)$
satisfies the inequalities of Theorem~\ref{main-p},
and $n$ is minimal with that property.

By Theorem~1.6 of~\cite{rbn},
$D \cup_{\{q_1, q_2\}} \pp^1 \to H$ is a BN-curve.
Our inductive hypothesis for Theorem~\ref{main-p}
thus shows $f^\circ$ is a BN-curve as desired.

\section{Proof of Theorem~\ref{main-3}}

To prove Theorem~\ref{main-3}, we will argue by induction on $n$.
Write $\rho_i = 4d_i - 3g_i - 12$ for the Brill--Noether number of $f_i$.
Note that, since $f_i$ passes through $n$ general points, we have from Theorem~\ref{thm11}
that $n \leq 2d_i$; by assumption one of these inequalities is strict.
Note also that by assumption, $4(d_1 + d_2) - 3(g_1 + g_2 + n - 1) - 12 \geq 0$;
upon rearrangement, this becomes
\begin{equation} \label{n-upper}
n \leq \frac{\rho_1 + \rho_2 + 15}{3}.
\end{equation}
We will separately consider several cases:

\begin{proof}[Proof of Theorem~\ref{main-3} when $\rho_1 \geq 4$ and $n \leq 2 d_1 - 1$.]
If $\rho_2 \geq 4$ and $n \leq 2 d_2 - 1$, then
by permuting indices if necessary, we may assume without loss of generality
that $d_1 \geq d_2$, and that $g_1 \leq g_2$ if $d_1 = d_2$.
On the other hand, if $n \geq 2 d_2$, then $2 d_1 - 1 \geq n \geq 2 d_2$,
and so $d_1 > d_2$.
We may therefore assume that
$d_1 \geq d_2$, and $g_1 \leq g_2$ if $d_1 = d_2$,
subject only to the assumption that $\rho_2 \geq 4$.

Since $\rho(d, g, 3)$ is increasing in $d$ and decreasing in $g$, we conclude that
$\rho_2 \geq 4$ implies
$\rho_2 \leq \max(\rho(d_1, g_1, 3), \rho(d_1 - 1, 0, 3)) = \max(\rho_1, 4d_1 - 16)$.
On the other hand, $\rho_2 \leq 3$ implies $\rho_2 \leq 3 \leq 4 \leq \rho_1$.
We may therefore assume in all cases that
\[\rho_2 \leq \max(\rho_1, 4d_1 - 16).\]
Combining this with \eqref{n-upper}, we obtain
\[n \leq \frac{\rho_1 + \rho_2 + 15}{3} \leq \frac{\rho_1 + \max(\rho_1, 4d_1 - 16) + 15}{3} = \frac{\max(8 d_1 - 6 g_1 - 9, 8 d_1 - 3 g_1 - 13)}{3}.\]
In particular, if $(d_1, g_1) \in \{(6, 2), (7, 4)\}$, then $n \leq 10$.
Thus,
\[n - 1 \leq \begin{cases}
2(d_1 - 1) & \text{if $(d_1 - 1, g_1) \notin \{(5, 2), (6, 4)\}$;} \\
9 & \text{if $(d_1 - 1, g_1) \in \{(5, 2), (6, 4)\}$.}
\end{cases}\]

Since $\rho(d_1 - 1, g_1, 3) = \rho_1 - 4 \geq 0$, we may (using Theorem~1.6 of~\cite{rbn})
specialize $f_1$ to a map from a reducible curve $f_1^\circ \colon C_1' \cup_p \pp^1 \to \pp^3$,
with $C_1'$ of genus $g_1$, and $f_1^\circ|_{C_1'}$ of degree $d_1 - 1$, and $f_1^\circ|_{\pp^1}$ of degree $1$.
By the above inequality, we may do this so
$f_1^\circ$ still passes through a set $\Gamma = \Gamma' \cup \{x, y\}$
of $n$ general points, such that $f_1^\circ|_{C_1'}$ passes through $\Gamma'$,
and $f_1^\circ|_{\pp^1}$ passes through $\{x, y\}$, and such that $\Gamma' \cup \{p\}$
is a general set of $n - 1$ points.

As in Lemma~3.5 of \cite{rbn},
it suffices to show 
$(C_1' \cup_{\{p\}} \pp^1) \cup_{\Gamma' \cup \{x, y\}} C_2 \to \pp^3$
is a BN-curve. For this, we simply rewrite this map as
$C_1' \cup_{\Gamma' \cup \{p\}} (\pp^1 \cup_{\{x, y\}} C_2) \to \pp^3$,
which is a BN-curve
by Theorem~1.6 of~\cite{rbn} and our inductive hypothesis.
\end{proof}

\begin{proof}[Proof of Theorem~\ref{main-3} when $\rho_1 \geq 4$ and $n = 2 d_1$.]
From \eqref{n-upper}, we obtain
\[\rho_2 \geq 3n - 15 - \rho_1 = \frac{\rho_1}{2} + 3 + \frac{9}{2} g_1 \geq \frac{4}{2} + 3 = 5 \geq 4.\]
And since by assumption we do not
have $n = 2 d_1 = 2 d_2$, we have $n \leq 2 d_2 - 1$.
Exchanging indices, we are thus in the previous case.
\end{proof}

This completes the proof when $\rho_1 \geq 4$, and thus by symmetry when $\rho_2 \geq 4$.
Exchanging indices if necessary, it therefore remains to consider the case $\rho_1 \leq \rho_2 \leq 3$.

\begin{proof}[Proof of Theorem~\ref{main-3} when $\rho_1 \leq \rho_2 \leq 3$.]
In this case, we argue by induction on $\rho_1$.
If $\rho_1 = 0$, then using \eqref{n-upper},
the result follows from Theorem~1.6 of~\cite{rbn}.

For the inductive step, we therefore suppose $1 \leq \rho_1 \leq 3$ (which forces $d_1 \geq 4$ and $g_1 \geq 1$).
Note that \eqref{n-upper} gives
\[n \leq \frac{\rho_1 + \rho_2 + 15}{3} \leq \frac{3 + 3 + 15}{3} = 7,\]
with equality only if $\rho_1 = 3$ (which forces $d_1 \geq 6$).
In particular,
\[n \leq \begin{cases}
2(d_1 - 1) & \text{if $(d_1 - 1, g_1 - 1) \notin \{(5, 2), (6, 4)\}$;} \\
9 & \text{if $(d_1 - 1, g_1 - 1) \in \{(5, 2), (6, 4)\}$.}
\end{cases}\]

Since $\rho(d_1 - 1, g_1 - 1, 3) = \rho_1 - 1 \geq 0$, we may (using Theorem~1.6 of~\cite{rbn})
specialize $f_1$ to a map from a reducible curve $f_1^\circ \colon C_1' \cup_{\{p, q\}} \pp^1 \to \pp^3$,
with $C_1'$ of genus $g_1 - 1$, and $f_1^\circ|_{C_1'}$ of degree $d_1 - 1$, and $f_1^\circ|_{\pp^1}$ of degree $1$.
By the above inequality, we may do this so
$f_1^\circ$ still passes through a set $\Gamma = \Gamma' \cup \{x, y\}$
of $n$ general points, such that $f_1^\circ|_{C_1'}$ passes through $\Gamma'$,
and $f_1^\circ|_{\pp^1}$ passes through $\{x, y\}$, and such that $\Gamma' \cup \{p, q\}$
is a general set of $n$ points.

As in Lemma~3.5 of \cite{rbn},
it suffices to show 
$(C_1' \cup_{\{p, q\}} \pp^1) \cup_{\Gamma' \cup \{x, y\}} C_2 \to \pp^3$
is a BN-curve. For this, we simply rewrite this map as
$C_1' \cup_{\Gamma' \cup \{p, q\}} (\pp^1 \cup_{\{x, y\}} C_2) \to \pp^3$,
which is a BN-curve
by Theorem~1.6 of~\cite{rbn}, together with either an application of our inductive hypothesis
or one of the two previously-considered cases.
\end{proof}
\bibliographystyle{amsplain.bst}
\bibliography{mrcbib}

\end{document}